\documentclass[12pt]{amsart}    
\usepackage[colorlinks]{hyperref}
\usepackage{amsmath,amssymb,latexsym, amscd}
\usepackage{mathabx}
\usepackage{exscale, cite, eps fig, graphics}
\usepackage{mathtools}
\usepackage{graphicx}
\usepackage{subcaption}
\usepackage{enumitem}
\usepackage{lipsum}

\setlength{\textheight}{23.0cm}
\setlength{\textwidth}{16.5cm}
\setlength{\headsep}{0.5cm}
\setlength{\footskip}{1.0cm}
\calclayout

\renewcommand{\geq}{\geqslant}
\renewcommand{\leq}{\leqslant}

\renewcommand{\L}{\mathcal{L}}

\renewcommand{\k}{\kappa}

\newcommand{\g}{\gamma}

\renewcommand{\b}{\beta}

\newcommand{\R}{\mathbb{R}}
\newcommand{\Z}{\mathbb{Z}}
\newcommand{\C}{\mathbb{C}}

\newtheorem{theorem}{Theorem}[section]
\newtheorem{lemma}[theorem]{Lemma}

\numberwithin{equation}{section}

\title[High-frequency instabilities]{High-frequency instabilities of the Ostrovsky equation}

\author[Bhavna]{Bhavna}
\email[Bhavna]{bhavnak@iiitd.ac.in}

\author[Kumar]{Atul~Kumar}
\email[Kumar]{atulk@iiitd.ac.in}

\author{Ashish~Kumar~Pandey}
\address{Department of Mathematics, IIIT Delhi, India 110020}
\email[Corresponding author]{ashish.pandey@iiitd.ac.in}

\date{\today}

\begin{document}

\maketitle

\begin{abstract}
We study spectral stability of small amplitude periodic traveling waves of the Ostrovsky equation. We prove that these waves exhibit spectral instabilities arising from a collision of pair of non-zero eigenvalues on the imaginary axis when subjected to square integrable perturbations on the whole real line. We also list all such collisions between pair of eigenvalues on the imaginary axis and do a Krein signature analysis.
\end{abstract}

\section{Introduction}\label{sec:intro}

The Ostrovsky equation
\begin{equation}\label{E:Ost}
(u_t - \beta u_{xxx}+(u^2)_x)_x = \g u ,\quad x\in \R\end{equation}
was derived by Ostrovsky (see \cite{Ostrovsky1978NonlinearOcean}) as a model for the unidirectional propagation of weakly nonlinear long surface and internal waves of small amplitude in rotating liquid. The liquid is assumed to be incompressible and inviscid. Here, $u(x,t)$ represents the free surface of the liquid. The constant $\gamma>0$ measures the effect of rotation and is rather small for the real conditions of the Earth rotation \cite{Galkin1991OnFluid}. The parameter $\beta$ determines the type of dispersion, namely $\beta<0$ (negative dispersion) for surface and internal waves in the ocean and surface waves in a shallow channel with an uneven bottom and $\beta>0$ (positive dispersion) for capillary waves on the surface of liquid or for oblique magneto-acoustic waves in plasma \cite{Gilman1995ApproximateEquation}. 

Setting $\g = 0$ in \eqref{E:Ost} and integrating with respect to $x \in \mathbb{R}$ and assuming the solution $u(x,t)$ and all the derivatives are vanishing at infinity, one obtains the well-known Korteweg-de Vries (KdV) equation 
\begin{equation}\label{e:kdv}
   u_t -\beta u_{xxx}+(u^2)_x = 0.
\end{equation}
The Ostrovsky equation is non-local and dispersive with linear dispersion as
\[
\omega(k) =\frac{\g}{k}+\b k^3. 
\]
It is also Hamiltonian
\[
u_t = \frac{\partial \mathcal{H}}{\partial u}
\]
where
\[
    \mathcal{H} = \int_{\mathbb{R}} \left(\frac{\beta}{2}|u_x|^2 +\frac{\g}{2} |D_x^{-1}u|^2+\frac{1}{3}u^3 \right)dx,
    \]
and for $k \in \mathbb{N}$, the operator $D_x^{-k}$ is defined by
    \[
    \widehat{(D_x^{-k}f})(\xi) = (i\xi)^{-k}\hat{f}(\xi).
    \]
The Ostrovsky equation, unlike KdV, is nonintegrable by the method of the inverse scattering transform. The local and global-well posedness of the Ostrovsky equation are known in some weighted Sobolev spaces \cite{Linares2006LocalEquation}.

The stability or instability of different type of solutions of the Ostrovsky and related models have been investigated by several authors. The orbital stability of solitary-wave solutions of the Ostrovsky equation has been established in \cite{Lu2012OrbitalEquation}. In \cite{Hakkaev2017PeriodicStability}, periodic traveling waves of \eqref{E:Ost} with general nonlinearity has been constructed for small values of $\g$ and shown to be spectrally stable to periodic perturbations of the same period as the wave.

In this article, we investigate spectral stability of small amplitude periodic traveling waves of the Ostrovsky equation. We use a standard argument based on implicit function theorem and Lyapunov-Schmidt reduction to establish the existence of a family of periodic traveling waves. As a consequence, we obtain small amplitude expansion of these periodic traveling waves. We linearize \eqref{E:Ost} about obtained periodic traveling wave and examine the $L^2(\mathbb{R})$-spectrum of the linearized operator. In case of periodic perturbations, one needs to restrict on mean zero space because of the presence of $\partial_z^{-1}$ in the linearized operator. But for square integrable perturbations on the whole real line, we use Floquet-Bloch theory which transforms $\partial_z^{-1}$ to $(\partial_z+i\xi)^{-1}$, where $\xi$ is the Floquet exponent. As a result, for $\xi\neq 0$, we need not restrict to mean zero space. In terms of perturbations, $\xi\neq 0$ corresponds to non-modulational perturbations and resulting spectral instability is termed as {\em high-frequency instability} \cite{Deconinck2017High-frequencyPDES}. We show that obtained small amplitude periodic traveling waves of \eqref{E:Ost} exhibit high-frequency instability.

In Section~\ref{sec:periodic}, we obtain periodic traveling waves of the Ostrovsky equation bifurcating from the trivial solution. We set up the spectral stability problem in Section~\ref{sec:spectral} and prove the existence of high-frequency instabilities in Section~\ref{sec:highfrequency}.

\subsection*{Notations}\label{sec:notations}
The following notations are going to be used throughout the article. Here, $L^2(\mathbb{R})$ denotes the set of real or complex valued, Lebesgue measurable functions $f(x)$ over $\mathbb{R}$ such that
\[
\|f\|_{L^2(\mathbb{R})}=\Big(\frac{1}{2\pi}\int_\R |f|^2~dx\Big)^{1/2}<+\infty \quad 
\]
and $L^2(\mathbb{T})$ denote the space of $2\pi$-periodic, measurable, real or complex valued functions over $\mathbb{R}$ such that
\[
\|f\|_{L^2(\mathbb{T})}=\Big(\frac{1}{2\pi}\int^{2\pi}_0 |f|^2~dx\Big)^{1/2}<+\infty. 
\]
For $f \in L^1(\mathbb{R})$, the Fourier transform of $f$ is written as $\hat{f}$ and defined by 
\[
\hat{f}(t)=\frac{1}{\sqrt{2\pi}}\int_{\R} f(x)e^{-itx}dx
\]
It follows from Parseval Theorem that if $f\in L^2(\mathbb{R})$ then $\|\hat{f}\|_{L^2(\mathbb{R})} = \|{f}\|_{L^2(\mathbb{R})}$.
Moreover, for any $s\in \mathbb{R}$, let $H^s(\mathbb{R})$ consist of tempered distributions such that 
\[
\|f\|_{H^s(\mathbb{R})} = \left(\int_{\R}(1+|t|^2)^s|\hat{f}(t)|^2dt\right)^{\frac{1}{2}} < +\infty
\]
Furthermore, $L^2(\mathbb{T})$-inner product is defined as
\begin{equation}\label{def:i-product}
\langle f,g\rangle=\frac{1}{2\pi}\int^{2\pi}_{0} f(z)\overline{g}(z)~dz
=\sum_{n\in\mathbb{Z}} \widehat{f}_n\overline{\widehat{g}_n}.
\end{equation}
For any $k\in \mathbb{N}$, let $H^k(\mathbb{T})$ be the space of $L^2(\mathbb{T})$ functions whose derivatives up to $k$th order are all in $L^2(\mathbb{T})$. Let $H^\infty(\mathbb{T})=\bigcap_{k=1}^\infty H^k(\mathbb{T})$.

\section{Sufficiently small and periodic traveling waves}\label{sec:periodic}
A traveling wave of \eqref{E:Ost} is a solution which propagates at a constant velocity without change of form. That is, $u(x,t)=U(x-ct)$ for some $c \in \R$. Substituting this in \eqref{E:Ost} leads to 
\begin{equation*}\label{E:U}
    cU^{\prime\prime}+\b U^{\prime\prime\prime\prime}-(U^2)^{\prime\prime}+\g U=0
\end{equation*}
We seek a $\it{periodic\hspace{3px}traveling\hspace{3px}wave}$ of \eqref{E:Ost}. That is, $U$ is a $2\pi/k$-periodic function of its argument where $k>0$ is the wave number. Taking $z:=kx$, the function $w(z):=U(kx)$ is $2\pi$-periodic in $z$ and satisfy
\begin{align}\label{E:w}
    ck^2 w''+\b k^4 w''''-k^2(w^2)''+\g w=0.
\end{align}
Note that \eqref{E:w} is invariant under $z\mapsto z+z_0$ and $z\mapsto -z$ and therefore, we may assume that $w$ is even. Also, note that \eqref{E:w} does not possess scaling invariance. Hence, we may not a priori assume that $k=1$. In fact, the stability result reported in Theorem \ref{thm:main} depends on $k$. To compare, the KdV equation \textcolor{red}{\eqref{e:kdv}} for periodic traveling waves possesses scaling invariance and stability results are independent of the carrier wave number, see \cite{Bronski2010TheEquation}, for instance. 

In what follows, we seek a non-trivial $2\pi$-periodic solution $w$ of \eqref{E:w}. For fixed $\b$ and $\g$, let $F:H^4(\mathbb{T})\times \R \times \R^+ \to L^2(\mathbb{T})$ be defined as
\begin{align}\label{E:F}
    F(w,c;k)=ck^2 w''+\b k^4 w''''-k^2(w^2)''+\g w.
\end{align}
It is well defined by a Sobolev inequality. We seek a solution $w\in H^4(\mathbb{T})$, $c \in \R$ and $k>0$ of 
\begin{equation*}\label{E:F1}
    F(w,c;k)=0.
\end{equation*}
Note that if $w \in H^4(\mathbb{T})$, then from \eqref{E:w}, $w'''' \in H^2(\mathbb{T})$ by a Sobolev inequality. Therefore, $w \in H^6(\mathbb{T})$. By a bootstrap argument, we obtain that $w \in H^\infty (\mathbb{T})$. 

The operator $F$ in \eqref{E:F} is a polynomial in parameters $c$ and $k$. Its Fr\'echet derivatives with respect to $w$ are all continuous from $H^4(\mathbb{T})$ to $L^2(\mathbb{T})$. Therefore, $F$ is a real analytic operator.

Clearly, $F(0,c;k)=0$ for all $c \in \R$ and $k>0$. If non-trivial solutions of $F(w,c;k)=0$ bifurcates from $w \equiv 0$ for some $c=c_0$ then
\begin{equation*}\label{E:df}
    L_0 := \partial_wF(0,c_0;k) = c_0k^2\partial_z^2 + \b k^4 \partial_z^4 + \g
\end{equation*} from $H^4(\mathbb{T})$ to $L^2(\mathbb{T})$, is not an isomorphism. From a straightforward calculation, 
\begin{equation*}\label{E:df1}
    L_0 e^{inz} = (-c_0k^2n^2 + \b k^4 n^4 + \g) e^{inz} = 0, \quad n \in \Z
\end{equation*}
if and only if 
\begin{equation}\label{E:c0}
    c_0=\frac{\g}{k^2 n^2}+\b k^2 n^2, \quad n\in \Z.
\end{equation}
Without loss of generality, we take $n=1$. Note that for $\beta>0$, wavenumbers, $k=\left(\frac{\g}{\b n^2}\right)^{1/4}$, $2\le n\in \mathbb{N}$, satisfy resonance condition
\[
\frac{\g}{k^2}+\b k^2 =\frac{\g}{k^2 n^2}+\b k^2 n^2
\]
of fundamental mode and $n$th harmonic then the kernel of $L_0$ is four-dimensional. For all other values of $k$, $L_0$ is a Fredholm operator of index zero with both kernal and co-kernal spanned by $e^{\pm iz}$. 

Next, we employ a Lyapunov-Schmidt procedure to establish the existence of a one-parameter family of non-trivial solutions of $F(w,c;k)=0$ bifurcating from $w \equiv 0$ and $c = c_0$. The proof follows along the same lines as the arguments in \cite{Hur2015ModulationalWaves,Hur2019ModulationalModel} and we do not include it here. We summarize the existence result for periodic traveling waves of \eqref{E:Ost} and their small amplitude expansion below.
\begin{theorem}\label{T:sol}
For any $k>0$ if $\beta<0$ and $k\neq \left(\frac{\g}{\b n^2}\right)^{1/4}$, $2\le n\in \mathbb{N}$ if $\b>0$, a one parameter family of solutions of \eqref{E:w} exists, given by $u(x,t)=w(a;k)(k(x-c(a;k)t))$ for $a \in \R$ and $|a|$ sufficiently small; $w(a;k)(\cdot)$ is $2\pi$-periodic, even and smooth in its argument, and $c(a;k)$ is even in $a$; $w(a;k)$ and $c(a;k)$ depend analytically on $a$ and $k$. Moreover, 
\begin{align*}\label{E:w_ansatz}
    w(a;k)(z)=a\cos(z) + a^2A_2\cos 2z + a^3A_3\cos 3z +a^4(A_{42}\cos 2z+A_{44}\cos 4z)+ O(a^5),
\end{align*}
and
\begin{align*}
    c(a;k)=c_0+a^2c_2+a^4c_4+O(a^6)
\end{align*}
as $a \to 0$, where $c_0$ is in \eqref{E:c0},
\[
A_2 = \dfrac{2k^2}{3\g-12\b k^4}, \quad  A_3 = \dfrac{9k^2A_2}{8\g-72\b k^4}, \quad A_{42}=2A_2A_3-2A_2^3, \quad  A_{44}=\dfrac{8k^2(A_2^2+2A_3)}{15\g-240\b k^4},
\]
\[
c_2=A_2, \quad \text{and} \quad c_4=3A_2A_3-2A_2^3.
\]
\end{theorem}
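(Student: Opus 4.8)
The plan is to carry out a Lyapunov--Schmidt reduction in the even subspace, and then extract the coefficients by an order-by-order perturbation expansion. Because \eqref{E:w} is invariant under $z\mapsto -z$ and $L_0 = c_0k^2\partial_z^2 + \b k^4\partial_z^4 + \g$ preserves parity, I would work in the closed subspace $H^4_{\mathrm{even}}(\mathbb{T})$ of even functions, on which the kernel of $L_0$ is one-dimensional and spanned by $\cos z$ (the odd generator $\sin z$ being discarded). A direct computation using \eqref{E:c0} with $n=1$ gives
\begin{equation*}
L_0\cos(nz) = (n^2-1)(\b k^4 n^2 - \g)\cos(nz),\qquad n\geq 0,
\end{equation*}
so $L_0$ is self-adjoint and, under the hypothesis on $k$, invertible on the $L^2(\mathbb{T})$-orthogonal complement of $\cos z$ inside the even functions. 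Writing $w = a\cos z + \phi$ with $\langle\phi,\cos z\rangle = 0$ and letting $P$ be the orthogonal projection onto $\cos z$, $Q = I - P$, the equation $F(w,c;k)=0$ splits into $QF = 0$ and $PF = 0$. Since $\partial_\phi(QF)|_{(0,c_0)} = QL_0Q$ is an isomorphism of the complement, the analytic implicit function theorem solves $QF=0$ for $\phi = \phi(a,c;k)$, analytic in $(a,c,k)$ with $\phi(0,c_0;k)=0$. Substituting this back, the scalar bifurcation equation $\langle F(a\cos z + \phi,c;k),\cos z\rangle = 0$ factors as $a\,h(a,c;k) = 0$ with $h(0,c_0;k)=0$ and $\partial_c h(0,c_0;k) = -k^2\langle\cos z,\cos z\rangle\neq 0$; a second application of the implicit function theorem then yields the analytic branch $c = c(a;k)$, $c(0;k)=c_0$.

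For the explicit coefficients I would substitute
\begin{align*}
w &= a\cos z + a^2 w_2 + a^3 w_3 + a^4 w_4 + O(a^5),\\
c &= c_0 + a c_1 + a^2 c_2 + a^3 c_3 + a^4 c_4 + O(a^5)
\end{align*}
into $F(w,c;k)=0$ and collect equal powers of $a$. At each order $a^m$, $m\geq 2$, one obtains a linear equation $L_0 w_m = R_m$, where $R_m$ gathers the contributions of the nonlinearity $-k^2(w^2)''$ and the speed corrections $c_i k^2 w''$ from lower orders and contains the term $c_{m-1}k^2\cos z$. As $L_0$ is self-adjoint with kernel $\cos z$, the Fredholm alternative requires $\langle R_m,\cos z\rangle = 0$, and this scalar condition fixes $c_{m-1}$; the surviving harmonics of $R_m$ are then inverted through the formula above to recover $w_m$. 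For instance, at order $a^2$ the identity $(\cos^2 z)'' = -2\cos 2z$ produces no $\cos z$-term, so $c_1 = 0$ and $w_2 = A_2\cos 2z$ with $A_2 = 2k^2/(3\g - 12\b k^4)$; at order $a^3$, the product $2\cos z\,(A_2\cos 2z) = A_2(\cos z + \cos 3z)$ contributes a $\cos z$-term whose solvability condition gives $c_2 = A_2$, while the residual $\cos 3z$-term gives $A_3 = 9k^2 A_2/(8\g - 72\b k^4)$. Orders $a^4$ and $a^5$ are handled identically and yield $A_{42}$, $A_{44}$, $c_3 = 0$, and $c_4$.

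Finally I would record two structural facts. The vanishing of the odd-index speed corrections, hence the evenness of $c$ in $a$, follows from the translation invariance of \eqref{E:w} together with uniqueness of the branch: the $\pi$-shifted profile $w(a;k)(\cdot+\pi)$ solves \eqref{E:w} with the same speed but has $\cos z$-coefficient $-a$, so by uniqueness it equals $w(-a;k)(\cdot)$, whence $c(-a;k) = c(a;k)$. The smoothness $w\in H^\infty(\mathbb{T})$ is the elliptic bootstrap already noted before the statement. The only genuine obstacle is the resonance condition: the reduction of the kernel to $\cos z$ and the mode-by-mode inversion producing $A_2$, $A_3$, $A_{44}$ all break down exactly when some harmonic $n\geq 2$ satisfies $\b k^4 n^2 = \g$, for then $L_0\cos(nz) = 0$, the kernel jumps to four dimensions, and the corresponding denominator in the coefficient formulas vanishes. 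Excluding the wavenumbers $k = (\g/\b n^2)^{1/4}$, $2\leq n\in\mathbb{N}$, when $\b>0$---and observing that no such resonance occurs when $\b<0$, since then $\b k^4 n^2 - \g < 0$ for all $n$---is precisely what keeps the reduction well posed and every coefficient finite.
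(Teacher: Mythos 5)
Your proposal is correct and follows exactly the route the paper indicates (a Lyapunov--Schmidt reduction in the even subspace plus the implicit function theorem, with the coefficients extracted order by order via the Fredholm solvability condition); the paper itself omits the details, deferring to the analogous arguments in the cited works of Hur et al. Your order-by-order computations reproduce the stated values of $A_2$, $A_3$, $A_{42}$, $A_{44}$, $c_2$, and the resonance restriction on $k$ for $\b>0$, and the $\pi$-translation argument for the evenness of $c$ in $a$ is the standard one.
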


\section{Linearization and the spectral problem}\label{sec:spectral}
We linearize \eqref{E:Ost} about the solution $w$ in Theorem~\ref{T:sol} in the coordinate frame moving at the speed $c$. The result becomes
\begin{align*}
    k(v_t - ck v_z -\b k^3 v_{zzz}+2k(w v)_z)_z=\g v.
\end{align*}
We seek a solution of the form $v(z,t) = e^{\frac{\lambda}{k} t} \Tilde{v}(z)$, $\lambda\in \mathbb{C}$, to arrive at
\begin{align}\label{E:opt}
  \mathcal T^\lambda_{k,a} \Tilde{v} := (\lambda\partial_z -k^2\partial^2_z(c + \b k^2 \partial^2_z - 2w )- \g)\Tilde{v} = 0
\end{align}
The operator $\mathcal T^\lambda_{k,a}$ is defined on $L^2(\R)$ with dense domain $H^4(\mathbb{R})$. We define the spectral stability of the periodic traveling wave solution $w$ with respect to square integrable perturbations as follows: it is spectrally stable if $\mathcal T^\lambda_{k,a}$ is invertible for any $\lambda \in \mathbb{C}$ with $\Re(\lambda)>0$, otherwise, it is deemed to be spectrally unstable.

The operator $\mathcal T^\lambda_{k,a}$ has continuous spectrum in $L^2(\R)$. By Floquet theory, since coefficients of $\mathcal T^\lambda_{k,a}$ are periodic functions, all solutions of \eqref{E:opt} in $L^2(\R)$ are of the form $\Tilde{v}(z)=e^{i\xi z}V(z)$ where $\xi\in (-1/2,1/2]$ is the Floquet exponent and $V$ is a $2\pi$-periodic function, see \cite{Haragus2008STABILITYEQUATION} for a similar situation. This helps to break the invertibility problem of $\mathcal T^\lambda_{k,a}$ in $L^2(\R)$ into a family of invertibility problems in $L^2(\mathbb{T})$.

\begin{lemma}\label{lem:ft}
 The linear operator $\mathcal T^\lambda_{k,a}$ is invertible in $L^2(\R)$ if and only if linear operators
\begin{align*}\label{E:bloch}
   \mathcal T^\lambda_{k,a,\xi} = \lambda(\partial_z+i\xi) - k^2(\partial_z+i\xi)^2(c + \b k^2 (\partial_z+i\xi)^2 - 2w )- \g
\end{align*}
acting in $L^2(\mathbb{T})$ with dense domain $H^4(\mathbb{T})$ are invertible, for any $\xi\in (-1/2,1/2]$.
\end{lemma}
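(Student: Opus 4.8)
The plan is to establish the invertibility equivalence via Floquet–Bloch theory, which decomposes the problem on $L^2(\R)$ into a family of problems on $L^2(\mathbb{T})$ parametrized by the Floquet exponent $\xi$. The key structural observation is that since the coefficients of $\mathcal T^\lambda_{k,a}$ are $2\pi$-periodic, every element of $L^2(\R)$ admits a Bloch decomposition. Concretely, I would use the fact that the Bloch transform $\mathcal{B}$ gives a unitary isomorphism
\[
L^2(\R) \cong \int_{(-1/2,1/2]}^{\oplus} L^2(\mathbb{T})\, d\xi,
\]
under which a periodic-coefficient differential operator becomes a direct integral (fiber) decomposition. The substitution $\Tilde v(z) = e^{i\xi z} V(z)$ with $V$ being $2\pi$-periodic is precisely the statement that, on the fiber at $\xi$, the operator $\partial_z$ acting on $\Tilde v$ corresponds to $(\partial_z + i\xi)$ acting on $V$. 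This is what transforms $\mathcal T^\lambda_{k,a}$ into the fiber operator $\mathcal T^\lambda_{k,a,\xi}$ stated in the lemma.

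\emph{First} I would verify the substitution carefully at the level of the differential operator: writing $\Tilde v = e^{i\xi z} V$, one checks $\partial_z \Tilde v = e^{i\xi z}(\partial_z + i\xi)V$ and iterating gives $\partial_z^j \Tilde v = e^{i\xi z}(\partial_z + i\xi)^j V$ for each $j$. Since $w$ is $2\pi$-periodic, the multiplication operator by $w$ commutes with the factor $e^{i\xi z}$, so conjugating $\mathcal T^\lambda_{k,a}$ by $e^{i\xi z}$ produces exactly $\mathcal T^\lambda_{k,a,\xi}$ acting on the periodic component $V$. This conjugation shows $\mathcal T^\lambda_{k,a}\, e^{i\xi z} V = e^{i\xi z}\,\mathcal T^\lambda_{k,a,\xi} V$, so the two operators are unitarily equivalent on each fiber.

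\emph{Next}, I would assemble the equivalence of invertibility. Under the direct-integral decomposition, $\mathcal T^\lambda_{k,a}$ is invertible on $L^2(\R)$ if and only if the family $\{\mathcal T^\lambda_{k,a,\xi}\}_\xi$ is uniformly invertible, i.e. each fiber operator is invertible with inverse norm bounded uniformly in $\xi$. For the forward direction, invertibility of the whole operator forces invertibility of each fiber (a fiber failing to be invertible produces an approximate null sequence that can be transplanted to $L^2(\R)$). For the reverse direction, fiberwise inverses $(\mathcal T^\lambda_{k,a,\xi})^{-1}$ assemble into a bounded inverse on $L^2(\R)$ provided their norms are uniformly bounded; here the uniform bound comes from the continuous (indeed analytic) dependence of $\mathcal T^\lambda_{k,a,\xi}$ on $\xi$ over the compact interval $(-1/2,1/2]$ together with the fact that the leading symbol $\b k^4(\partial_z+i\xi)^4$ dominates, making each fiber a relatively compact perturbation of an invertible elliptic operator on the torus.

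\textbf{The main obstacle} is the uniform boundedness of the fiber inverses needed for the reverse implication: pointwise invertibility of each $\mathcal T^\lambda_{k,a,\xi}$ does not automatically yield a bounded inverse of the direct integral unless $\sup_\xi \|(\mathcal T^\lambda_{k,a,\xi})^{-1}\|$ is finite. I would handle this by exploiting compactness of the parameter interval and continuity of $\xi \mapsto \mathcal T^\lambda_{k,a,\xi}$ in the appropriate operator topology (e.g. in the generalized/gap sense for closed operators), which upgrades pointwise invertibility on a compact set to a uniform bound. Since the problem is a rank-finite (specifically, the non-elliptic and multiplication parts are lower order relative to the fourth-order principal part), this continuity and the resulting uniform control follow from standard Fredholm and resolvent-continuity arguments, as in the analogous treatment in \cite{Haragus2008STABILITYEQUATION}.
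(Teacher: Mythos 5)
Your proposal is correct and follows essentially the same route as the paper, which does not write out a proof but defers to the Bloch/direct-integral argument of \cite[Proposition A.1]{Haragus2008STABILITYEQUATION}: the unitary Bloch decomposition, the conjugation identity $\mathcal T^\lambda_{k,a}\,e^{i\xi z}V=e^{i\xi z}\,\mathcal T^\lambda_{k,a,\xi}V$, and the reduction of invertibility to uniform fiberwise invertibility are exactly the ingredients of that reference. You also correctly isolate the only genuinely delicate point (the uniform bound on the fiber inverses) and resolve it by continuity in $\xi$ over the (effectively compact, after identifying $\xi=\pm1/2$) Floquet interval.
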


We refer to \cite[Proposition A.1]{Haragus2008STABILITYEQUATION} for a detailed proof in a similar situation. The $L^2(\mathbb{T})$-spectra of operators $T^\lambda_{k,a,\xi}$ consist of eigenvalues of finite multiplicity. Therefore, $T^\lambda_{k,a,\xi}$ is invertible in $L^2(\mathbb{T})$ if zero is not an eigenvalue of $T^\lambda_{k,a,\xi}$. Using this, we have the following result.

\begin{lemma}\label{lem:eq}
The operator $\mathcal T^{\lambda}_{k,a,\xi}$ is not invertible in $L^2(\mathbb{T})$ for some $\lambda\in \C$ and $\xi\neq 0$ if and only if $\lambda\in\sigma(\mathcal{A}_{k,a,\xi})$, $L^2(\mathbb{T})$-spectrum of the operator,
\begin{align*}
    \mathcal{A}_{k,a,\xi} := k^2(\partial_z+i\xi)(c + \b k^2 (\partial_z+i\xi)^2 - 2w ) + \g(\partial_z+i\xi)^{-1}.
\end{align*}
\end{lemma}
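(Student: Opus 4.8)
The plan is to reduce the non-invertibility of $\mathcal{T}^\lambda_{k,a,\xi}$ to an eigenvalue problem for $\mathcal{A}_{k,a,\xi}$ by factoring out the operator $D:=\partial_z+i\xi$, which is invertible on $L^2(\mathbb{T})$ precisely because $\xi\neq 0$. On the Fourier basis $D\,e^{inz}=i(n+\xi)e^{inz}$, and for $\xi\in(-1/2,1/2]\setminus\{0\}$ one has $n+\xi\neq 0$ for every $n\in\Z$ with $\inf_{n\in\Z}|n+\xi|=\operatorname{dist}(\xi,\Z)>0$; hence $D$ is an isomorphism of $H^{s+1}(\mathbb{T})$ onto $H^{s}(\mathbb{T})$ for every $s$, with bounded inverse $D^{-1}$ given by the multiplier $(i(n+\xi))^{-1}$. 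This is the only place the hypothesis $\xi\neq 0$ enters: at $\xi=0$ the symbol vanishes at $n=0$, which forces the restriction to the mean-zero subspace. A direct computation then gives the factorization
\[
\mathcal{T}^\lambda_{k,a,\xi}=D\bigl(\lambda I-\mathcal{A}_{k,a,\xi}\bigr),
\]
since $D\,\mathcal{A}_{k,a,\xi}=k^2D^2(c+\b k^2D^2-2w)+\g$ and therefore $\lambda D-D\,\mathcal{A}_{k,a,\xi}=\mathcal{T}^\lambda_{k,a,\xi}$.

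For the forward implication, recall from the discussion following Lemma~\ref{lem:ft} that $\mathcal{T}^\lambda_{k,a,\xi}$ fails to be invertible exactly when its kernel is nontrivial. If $V\in H^4(\mathbb{T})\setminus\{0\}$ satisfies $\mathcal{T}^\lambda_{k,a,\xi}V=0$, then injectivity of $D$ together with the factorization forces $(\lambda I-\mathcal{A}_{k,a,\xi})V=0$; since $V\in H^4(\mathbb{T})$ lies in the domain $H^3(\mathbb{T})$ of $\mathcal{A}_{k,a,\xi}$, this exhibits $V$ as an eigenfunction, so $\lambda\in\sigma(\mathcal{A}_{k,a,\xi})$.

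For the converse I would first establish that $\mathcal{A}_{k,a,\xi}$ has compact resolvent: its principal part $\b k^4(\partial_z+i\xi)^3$ is an invertible third-order Fourier multiplier on the torus, while the remaining terms—multiplication operators built from the smooth wave $w$ and the bounded operator $\g D^{-1}$—are relatively compact; consequently $\sigma(\mathcal{A}_{k,a,\xi})$ is a discrete set of eigenvalues of finite multiplicity. Given $\lambda\in\sigma(\mathcal{A}_{k,a,\xi})$, choose $V\in H^3(\mathbb{T})\setminus\{0\}$ with $\mathcal{A}_{k,a,\xi}V=\lambda V$. Rewriting this as $\b k^4D^3V=\lambda V-k^2D(c-2w)V-\g D^{-1}V$ and reading off the regularity of the right-hand side (using that $w$ is smooth) upgrades $V$ to $H^\infty(\mathbb{T})\subset H^4(\mathbb{T})$ by a bootstrap. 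Then $\mathcal{T}^\lambda_{k,a,\xi}V=D(\lambda I-\mathcal{A}_{k,a,\xi})V=0$, so $V$ is a nontrivial kernel element and $\mathcal{T}^\lambda_{k,a,\xi}$ is not invertible.

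The computations here are routine; the only points that require care are bookkeeping ones, namely verifying the domain identifications that make the factorization $\mathcal{T}^\lambda_{k,a,\xi}=D(\lambda I-\mathcal{A}_{k,a,\xi})$ legitimate across the relevant Sobolev scales, and justifying the elliptic regularity bootstrap for the eigenfunctions of $\mathcal{A}_{k,a,\xi}$. All the conceptual content sits in the invertibility of $\partial_z+i\xi$ for $\xi\neq 0$, which is exactly what liberates us from the mean-zero constraint.
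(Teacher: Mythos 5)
Your proposal is correct and follows essentially the same route as the paper: the factorization $\mathcal T^\lambda_{k,a,\xi}=(\partial_z+i\xi)(\lambda I-\mathcal A_{k,a,\xi})$ together with the invertibility of $\partial_z+i\xi$ for $\xi\neq 0$ is exactly the paper's observation that $\mathcal T^\lambda_{k,a,\xi}V=0$ if and only if $\mathcal A_{k,a,\xi}V=\lambda V$, combined with the fact that the relevant spectra consist of eigenvalues of finite multiplicity. You merely make explicit two points the paper leaves implicit, namely the discreteness of $\sigma(\mathcal A_{k,a,\xi})$ via compactness of the resolvent and the elliptic bootstrap for its eigenfunctions.
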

\begin{proof}
The operator $\mathcal T^{\lambda}_{k,a,\xi}$ is not invertible in $L^2(\mathbb{T})$ for some $\lambda\in \C$ and $\xi\neq 0$ if and only if zero is an eigenvalue of $\mathcal T^{\lambda}_{k,a,\xi}$. Moreover, for a $V\in L^2(\mathbb{T})$, $\mathcal T^{\lambda}_{k,a,\xi}V=0$ if and only if $\mathcal{A}_{k,a,\xi}V=\lambda V$. The proof follows trivially.
\end{proof}

Note that $\xi\neq 0$ is important in Lemma~\ref{lem:eq}. For $\xi=0$, $\mathcal{A}_{k,a,0}$ is not well-defined on $L^2(\mathbb{T})$ since $\partial_z^{-1}$ is not well-defined on $L^2(\mathbb{T})$. In what follows, we restrict $\xi$ to be non-zero and examine the $L^2(\mathbb{T})$-spectrum of $\mathcal{A}_{k,a,\xi}$. To ease the notation, we will drop $k$ from subscript in $\mathcal{A}_{k,a,\xi}$. We observe that if $\lambda \in \sigma(\mathcal A_{a,\xi})$ then $\bar{\lambda}\in \sigma(\mathcal A_{a,-\xi})$, therefore, it is enough to consider $\xi\in\left(0,1/2\right]$. Also, since $w(z)$ is even in $z$, we have
\[
\sigma (\mathcal A_{a,\xi}) = \sigma(-\mathcal A_{a,-\xi}).
\]
Consequently, we obtain spectral instability of $w$ if $\sigma (\mathcal A_{a,\xi})$ is not contained in the imaginary axis for some $\xi \in \left(0,1/2\right]$. 

A straightforward calculation shows that
\begin{align}\label{E:eigen}
    \mathcal A_{0,\xi}e^{inz}=i\omega_{n,\xi}e^{inz}, \quad n\in \Z,
\end{align}
where
\begin{align}\label{E:omega}
    \omega_{n,\xi} = k^2(n+\xi)(c_0 - \b k^2(n+\xi)^2)-\frac{\g}{n+\xi}. 
\end{align}
We have $\sigma(\mathcal A_{0,\xi})\subset i\R$ which should be the case since $a=0$ corresponds to the zero solution which is trivially stable. As $|a|$ increases, the eigenvalues in \eqref{E:eigen} move around and may leave imaginary axis to give spectral instability. Because of the symmetry of the spectrum around real and imaginary axes, spectral instability takes place only if a pair of imaginary eigenvalues collide on the imaginary axis. If the spectral instability arises from a collision away from the origin on imaginary axis, it is termed as {\em High-frequency instability} \cite{Deconinck2017High-frequencyPDES}. 

Let $n\neq m\in \Z$, and $\xi_{n,m}\in (0,1/2]$ be such that
\begin{align}\label{eq:col}
    \omega_{n,\xi_{n,m}}=\omega_{m,\xi_{n,m}}.
\end{align}
A quick calculation reveals that for a fixed value of $\g$, collisions at the origin take place only for $\b>0$, all $n\in \mathbb{Z}$, $m=-n-1$, $\xi_{n,m}=1/2$, and $k=\left(\frac{\g}{\b(n+1/2)^2}\right)^{1/4}$. There is no collision at the origin if $\beta<0$. The collision at the origin for non-zero Floquet exponent is an interesting characteristics of the Ostrovsky equation. In similar studies on other various water wave models such collisions have not been observed, see \cite{Hur2015ModulationalWaves,Hur2016ModulationalType,Hur2019ModulationalModel}, for example. 

Here, we seek to find high-frequency instabilities and therefore, lists all collisions away from the origin below.

\begin{lemma}\label{lem:collision}
The collision condition in \eqref{eq:col} is satisfied away from the origin by:
\begin{enumerate}
     \item all pairs $\{n,m\}$ except $\{-1,1\}$, and $\{-\Delta n,0\}$, $\Delta n\geq 2$ if $\beta>0$, and
    \item pairs $\{n,0\}$, $n\leq -2$,  and $\{-1,1\}$ if $\beta < 0$.
\end{enumerate}
Moreover, if one of the colliding indices $n$ and $m$ is zero then the collision occurs for wavenumbers $k\in(k_{n,m}^{\min},\infty)$ otherwise the collision occurs in a finite interval $k\in(k_{n,m}^{\min},k_{n,m}^{\max})$ for some $k_{n,m}^{\min},k_{n,m}^{\max} > 0$.
\end{lemma}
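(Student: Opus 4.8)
The plan is to convert the collision condition \eqref{eq:col} into a single algebraic identity and then read off the admissible pairs and wavenumbers from the sign and range of a rational function of the Floquet exponent. First I would set $p=n+\xi$ and $q=m+\xi$ and compute from \eqref{E:omega} that
\[
\omega_{n,\xi}-\omega_{m,\xi}=(p-q)\Big[k^2c_0-\b k^4(p^2+pq+q^2)+\tfrac{\g}{pq}\Big].
\]
Since $p-q=n-m\neq0$, the bracket must vanish; substituting $k^2c_0=\g+\b k^4$ from \eqref{E:c0} (with $n=1$) and rearranging gives the clean relation
\[
\b k^4=\g\,\frac{pq+1}{pq\,(p^2+pq+q^2-1)}=:\g\,G(\xi).
\]
Thus a pair $\{n,m\}$ admits a collision at some $\xi\in(0,1/2]$ precisely when $G(\xi)$ is nonzero and has the same sign as $\b$ there, and the colliding wavenumber is then $k=(\g\,G(\xi)/\b)^{1/4}$.

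Next I would isolate the origin. Factoring the common eigenvalue as $\omega_{n,\xi}=(p^2-1)(\g-\b k^4p^2)/p$, one sees that $\omega_{n,\xi}=\omega_{m,\xi}=0$ forces $\g=\b k^4p^2=\b k^4q^2$ (the alternative $p^2=1$ being impossible for $\xi\in(0,1/2]$), hence $q=-p$, i.e.\ $\xi=1/2$, $m=-n-1$, $\b>0$; these are exactly the origin collisions already identified, and are excluded. Away from them the classification is governed by $\operatorname{sign}G(\xi)=\operatorname{sign}(pq+1)\cdot\operatorname{sign}(pq)\cdot\operatorname{sign}(p^2+pq+q^2-1)$. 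When $p$ and $q$ have the same sign (both indices $\geq0$, or both $\leq-1$) one checks $pq>0$, $pq+1>0$ and $p^2+pq+q^2-1=(n-m)^2-1+3pq>0$, so $G>0$ throughout and the pair collides only for $\b>0$. The opposite-sign pairs (one index $\geq0$, the other $\leq-1$) require the finer analysis: here $pq<0$, and I would track the two thresholds $pq=-1$ and $p^2+pq+q^2=1$ across $(0,1/2]$ to fix the sign of $G$. This is the step that isolates $\{-1,1\}$ and $\{n,0\}$, $n\leq-2$, as the pairs that collide for $\b<0$, the remaining pairs colliding for $\b>0$.

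Finally, the wavenumber range follows from the range of $G$ as $\xi$ runs over $(0,1/2]$, since $k=(\g\,G/\b)^{1/4}$ is increasing in $|G|$. If one of the indices is $0$, then as $\xi\to0^{+}$ the term $\g/(n+\xi)$ in \eqref{E:omega} blows up (equivalently $pq\to0$ forces $|G|\to\infty$), so $k\to\infty$ and the admissible set is $(k_{n,m}^{\min},\infty)$. If neither index is $0$, then $p$ and $q$ stay bounded away from $0$, $G$ is bounded, and the admissible set is a finite interval $(k_{n,m}^{\min},k_{n,m}^{\max})$, with endpoints coming from the extrema of $G$ on $[0,1/2]$.

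The main obstacle is the opposite-sign bookkeeping: because both $pq+1$ and $p^2+pq+q^2-1$ may change sign in the interior of $(0,1/2]$, the sign of $G$ — and hence the admissible $\b$ and the precise $k$-interval — has to be pinned down pair by pair, together with the verification that $\omega_{n,\xi}\neq0$ so that the collision genuinely remains away from the origin. Organizing these monotonicity and threshold estimates so that they hold uniformly in $\{n,m\}$, rather than through an unbounded case-by-case enumeration, is the delicate part of the argument.
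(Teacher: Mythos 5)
Your reduction is the paper's reduction in different clothing: with $p=n+\xi$ and $q=p+\Delta n$ one has $q^3-p^3-\Delta n=\Delta n\,(p^2+pq+q^2-1)$, so your identity $\b k^4=\g\,(pq+1)/\bigl(pq\,(p^2+pq+q^2-1)\bigr)$ is exactly the paper's $k^4=\tfrac{\g\Delta n}{\b}K(x,\Delta n)$. Two of your additions are genuine improvements: the uniform disposal of same-sign pairs via $p^2+pq+q^2-1=\Delta n^2-1+3pq>0$, and the factorization $\omega_{n,\xi}=(p^2-1)(\g-\b k^4p^2)/p$, which proves (rather than asserts, as the paper does) that all collisions other than the $m=-n-1$, $\xi=1/2$, $\b>0$ family are away from the origin. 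Your derivation of the $k$-ranges from boundedness or unboundedness of $G$ near $pq=0$ also matches the paper's continuity-plus-compactness argument.

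The gap is that the entire content of the lemma lives in the step you defer: the sign of $G$ for the mixed-sign pairs on $\xi\in(0,1/2]$. The paper settles this by factoring: the numerator $x^2+\Delta n\,x+1$ has roots $-\tfrac{\Delta n\pm\sqrt{\Delta n^2-4}}{2}$, and the residual denominator factor $3x^2+3\Delta n\,x+\Delta n^2-1$ is a positive multiple of $(x+1)^2$ when $\Delta n=2$ and has no real roots when $\Delta n\ge3$, so the sign on each interval $(n,n+\tfrac12]$ reduces to locating two algebraic numbers relative to half-integers, uniformly in $\Delta n$ --- this is precisely how the ``unbounded enumeration'' you worry about is avoided. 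More seriously, the outcome you assert for the deferred step is not what the computation gives: for $\Delta n\ge3$ the numerator root $-\tfrac{\Delta n+\sqrt{\Delta n^2-4}}{2}$ lies strictly inside $\bigl(-\Delta n,-\Delta n+\tfrac12\bigr)$, so on the interval relevant to the pair $\{-\Delta n,0\}$ the function $K$ takes both signs (for instance $K(-2.9,3)<0$ while $K(-2.6,3)>0$, and one checks via your own factorization of $\omega_{n,\xi}$ that the latter collision is not at the origin). Hence $\{-\Delta n,0\}$ with $\Delta n\ge3$ admits away-from-origin collisions for both signs of $\b$, and the clean dichotomy ``those pairs collide for $\b<0$, the rest for $\b>0$'' fails; this wrinkle is already visible in the paper's own sign chart for $K(x,\Delta n)$ and must be confronted, not assumed away, when you carry out the threshold-tracking you sketch.
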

\begin{proof}
Without loss of generality, we can assume that $n<m$ and $m=n+\Delta n$ with $\Delta n\in \mathbb{N}$. Wavenumbers $k$ that satisfy \eqref{eq:col} for $\{n,n+\Delta n\}$ are given by
\[
k^4=\frac{\gamma\Delta n}{\beta}K(x,\Delta n) := \frac{\gamma\Delta n}{\beta} \frac{ 1+x(x+\Delta n) }{ x (x+\Delta n) ( (x+\Delta n)^3 - x^3 - \Delta n ) }
\]
where $x=n+\xi$. In terms of the function $K(x,\Delta n)$, since $\gamma>0$, collision between $n$ and $n+\Delta n$ takes place 
\begin{enumerate}
    \item for $\beta >0$ if $K(n+\xi, \Delta n)>0$ for some $\xi\in (0,1/2]$, and
    \item for $\beta <0$ if $K(n+\xi, \Delta n)<0$ for some $\xi\in (0,1/2]$.
\end{enumerate}
Hence, it is important to know the sign of $K(x,\Delta n)$ for a fixed $\Delta n$ and $x\in \R$. We examine this case by case.
\begin{enumerate}
    \item \underline{\textbf{Case 1 ($\Delta n=1$):}} The function $K(x,1)$, see Figure~\ref{fig:k1}, is always positive except at singularities $-1$ and $0$. Therefore, there is a collision between $n$ and $n+1$ for all $n\in \Z$ when $\beta >0$ while there is no collision between $n$ and $n+1$ for any $n\in \Z$ when $\beta <0$.
    \item \underline{\textbf{Case 2 ($\Delta n=2$):}} The function $K(x,2)$, see Figure~\ref{fig:k2}, is positive for $x\in (-\infty,-2)\cup (0,\infty)$ and negative for $x\in (-2,0)$. Therefore, there is a collision between $n$ and $n+2$ for all $n\in \Z\backslash \{-1,-2\}$ when $\beta>0$ while there is a collision between $n$ and $n+2$ only for $n=-2,$ and $-1$ when $\beta <0$.
    \item \underline{\textbf{Case 3 ($\Delta n\geq 3$):}} The function $K(x,\Delta n)$, $\Delta n\geq 3$, see Figures~\ref{fig:k3} and \ref{fig:k4} for example, is positive in
    \[
    (-\infty,-\Delta n)\cup \left(-\frac{\Delta n+\sqrt{\Delta n^2-4}}{2},-\frac{\Delta n-\sqrt{\Delta n^2-4}}{2}\right)\cup (0,\infty),
    \]
    and negative in
    \[
    \left(-\Delta n,-\frac{\Delta n+\sqrt{\Delta n^2-4}}{2}\right)\cup \left(-\frac{\Delta n-\sqrt{\Delta n^2-4}}{2},0\right).
    \]
    For $\Delta n\geq 3$, we have
    \[
    -\Delta n<-\frac{\Delta n+\sqrt{\Delta n^2-4}}{2}<-\Delta n+\frac12, \text{ and } -\frac12<-\frac{\Delta n-\sqrt{\Delta n^2-4}}{2}<0.
    \]
    Therefore, there is a collision between $n$ and $n+\Delta n$ for all  $n\in \Z\backslash\{-\Delta n\}$ when $\beta>0$ while there is a collision between $n$ and $n+\Delta n$ only for $n=-\Delta n$ when $\beta <0$.
\end{enumerate}
This proves the existence of all pairs satisfying collision condition \eqref{eq:col} away from the origin. 

Now, for a fixed $\Delta n$, if $K(n,\Delta n)>0$ for some $n\neq -\Delta n,0$ then it continue to be positive in $[n,n+1/2]$ and therefore, collision takes place between $n$ and $n+\Delta n$ for all $\xi\in (0,1/2]$. Since $K(x,\Delta n)$ restricted to $x\in [n,n+1/2]$ is a continuous function, it attains maximum and minimum in $[n,n+1/2]$ and therefore, collision takes place in a finite interval of wavenumbers $k\in(k_{n,m}^{\min},k_{n,m}^{\max})\subset (0,\infty)$, see Figure~\ref{fig:col2} for an example. For $n=-\Delta n$ or $0$, $K(x,\Delta n)$ is positive and unbounded either in $(n,n+1/2]$ and therefore $K(x,\Delta n)$ is bounded below but unbounded above. In these cases, collision takes place in an interval of wavenumbers $k\in(k_{n,m}^{\min},\infty)\subset (0,\infty)$, see Figure~\ref{fig:col1} for an example. This completes the proof.   
\end{proof}

\begin{figure}
\centering
\begin{subfigure}[b]{.45\linewidth}
\includegraphics[width=\linewidth]{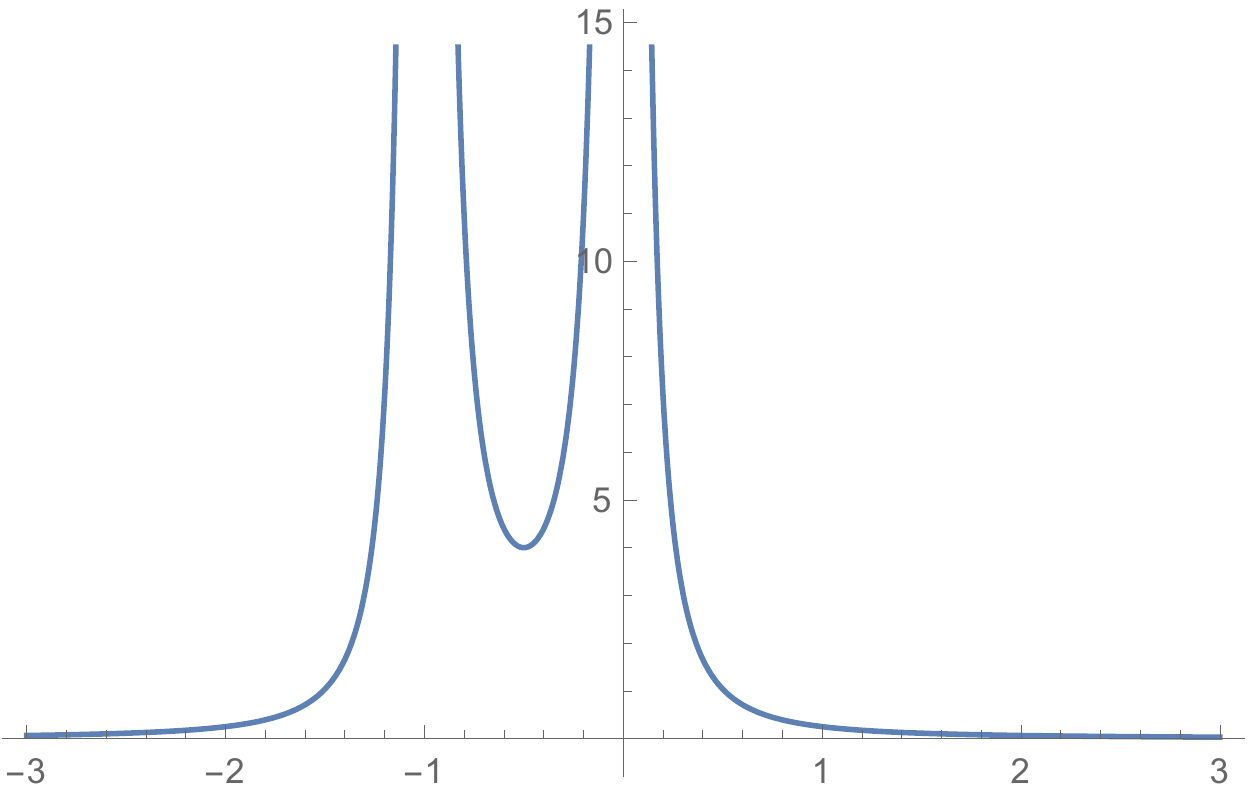}
\caption{$K(x,1)$}\label{fig:k1}
\end{subfigure}
\begin{subfigure}[b]{.45\linewidth}
\includegraphics[width=\linewidth]{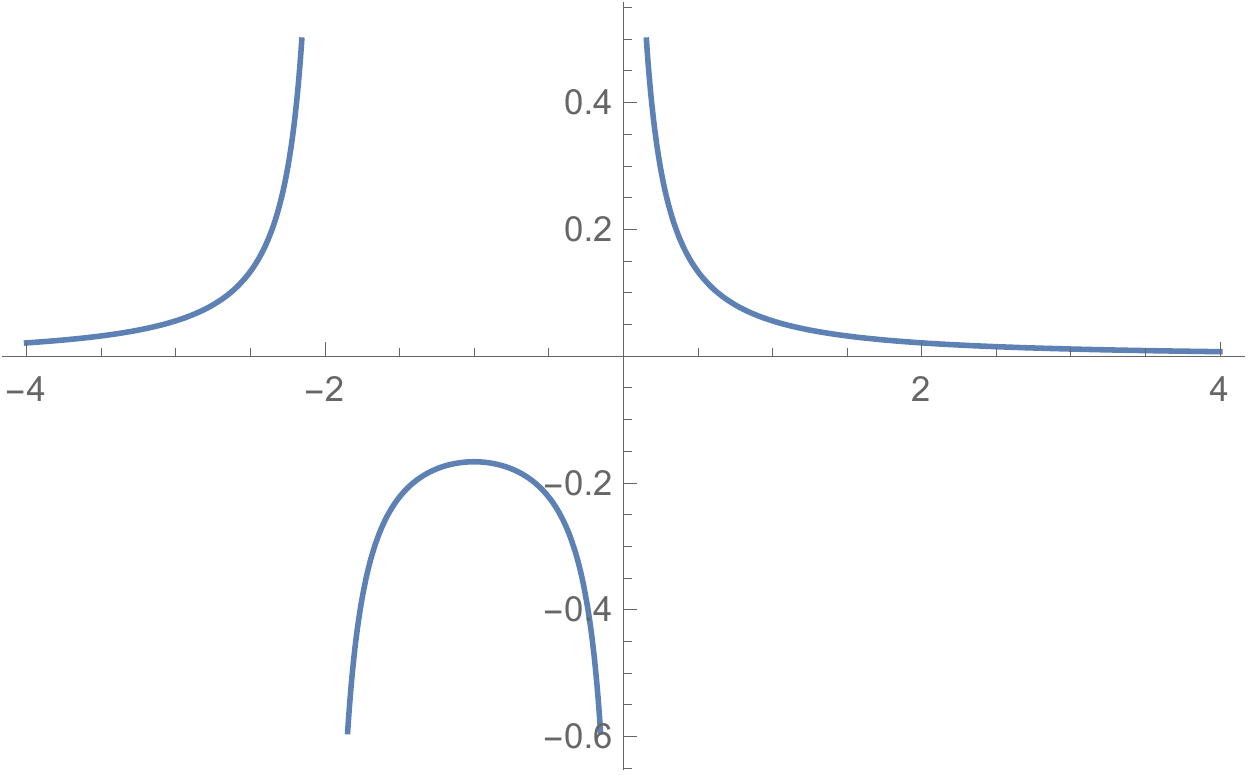}
\caption{$K(x,2)$}\label{fig:k2}
\end{subfigure}
\begin{subfigure}[b]{.45\linewidth}
\includegraphics[width=\linewidth]{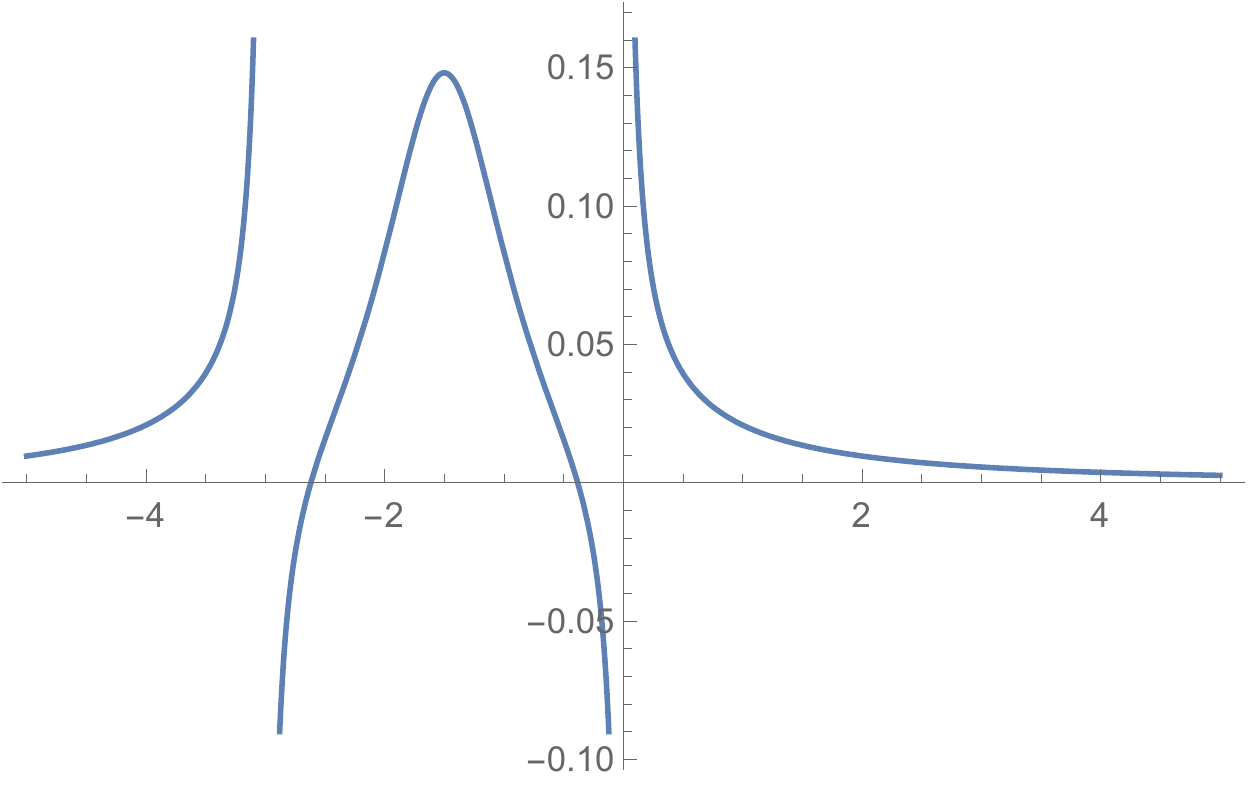}
\caption{$K(x,3)$}\label{fig:k3}
\end{subfigure}
\begin{subfigure}[b]{.45\linewidth}
\includegraphics[width=\linewidth]{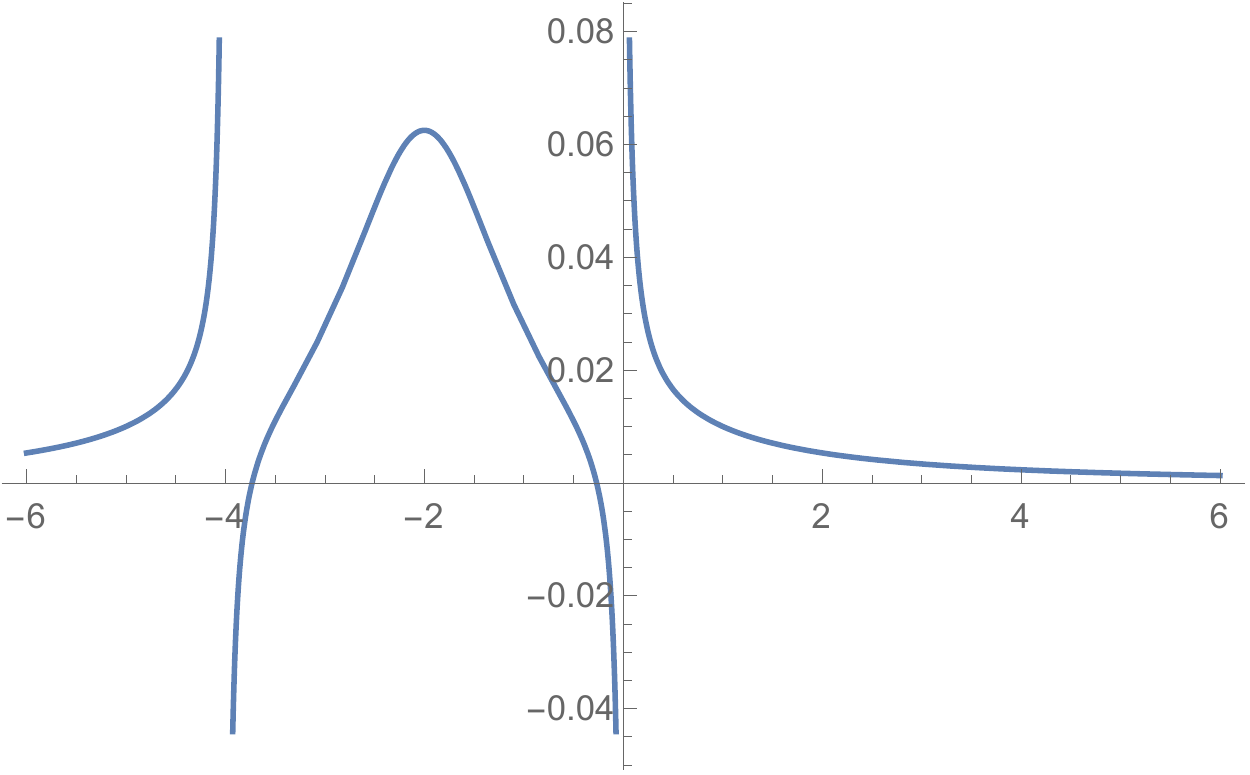}
\caption{$K(x,4)$}\label{fig:k4}
\end{subfigure}
\caption{Graph of function $K(x,\Delta n)$ vs. $x$ for $\Delta n=1,2,3$, and $4$.}
\label{fig:k}
\end{figure}

\begin{figure}
\centering
\begin{subfigure}[b]{.45\linewidth}
\includegraphics[width=\linewidth]{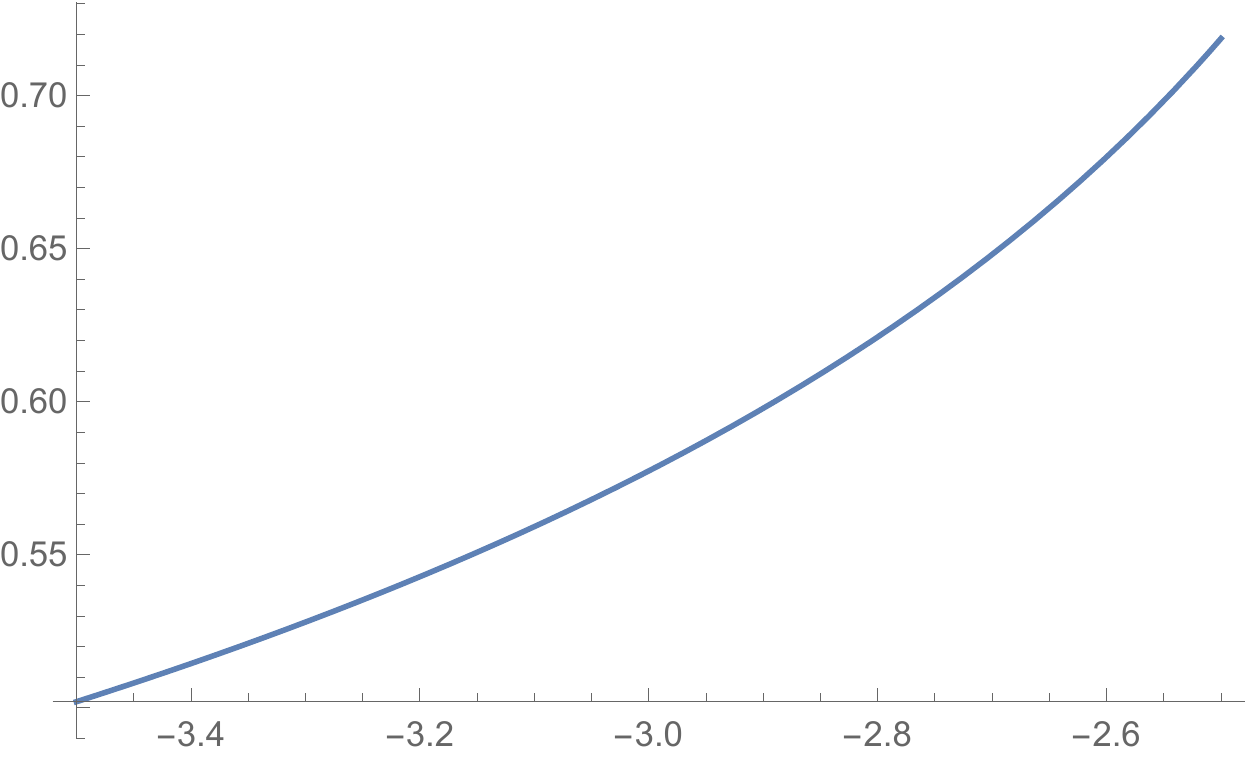}
\caption{$\gamma=1$, $\beta=1$, $n=-3$, $m=-1$}\label{fig:col2}
\end{subfigure}
\begin{subfigure}[b]{.45\linewidth}
\includegraphics[width=\linewidth]{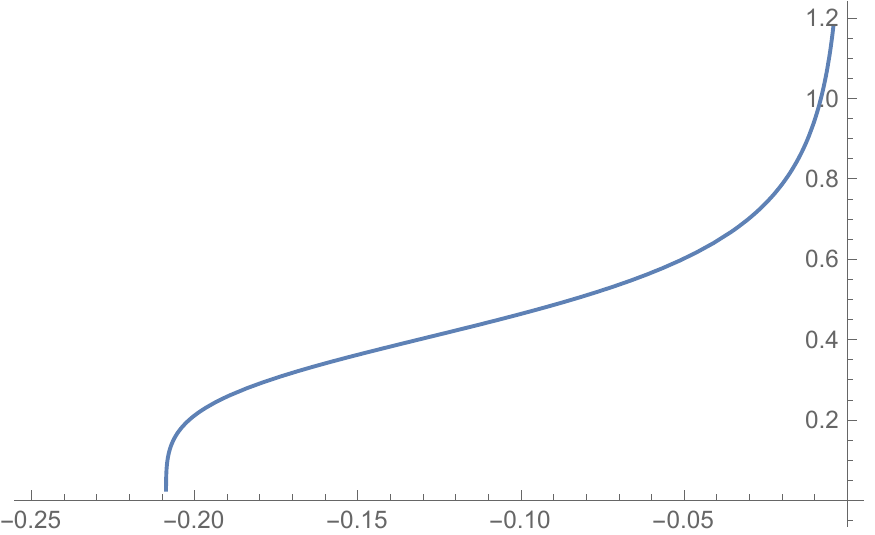}
\caption{$\gamma=1$, $\beta=-1$, $n=0$, $m=5$}\label{fig:col1}
\end{subfigure}
\caption{Graph of wavenumbers vs. $n+\xi$ for two collisions. The range of wavenumbers for which collision is taking place is approximately $(0.5,0.73)$ for the left plot and $(0, \infty)$ for the right plot.}
\label{fig:col}
\end{figure}

A necessary condition for collisions in Lemma~\ref{lem:collision} to provide high-frequency instability is that their {\em Krein signatures} at collision should be opposite \cite{MacKay1986STABILITYWAVES.}.
Since the Ostrovsky equation possesses a Hamiltonian structure, the linear operator $\mathcal{A}_{a,\xi}$ can be decomposed as 
\[
\mathcal{A}_{a,\xi} = J_\xi \L_{a,\xi}
\]
where $J_{\xi} = \partial_z + i\xi$ is skew-adjoint and
\[
 \L_{a,\xi} = k^2(c + \b k^2 (\partial_z+i\xi)^2 - 2w) + \g(\partial_z+i\xi)^{-2}
\]
is self-adjoint. With this decomposition, the Krein signature $\k_{n,\xi}$ of eigenvalues $i\omega_{n,\xi}$ in \eqref{E:omega} of $\mathcal{A}_{0,\xi}$ is given by
\begin{align}\label{eq:krein}
    \k_{n,\xi} = \operatorname{sgn}(\left<\L_{0,\xi} e^{inz}, e^{inz}\right>)=\operatorname{sgn}\left( \frac{1}{n+\xi}\omega_{n,\xi}\right)
\end{align}
where $\operatorname{sgn}$ is the signum function which determines the sign of a real number. If the collision condition \eqref{eq:col} is satisfied for some $n,m\in \Z$ and $\xi_{n,m}\in (0,1/2]$ then \eqref{eq:krein} provides that eigenvalues $i\omega_{n,\xi}$ and $i\omega_{m,\xi}$ have opposite Krein signatures at the collision if
\begin{align}\label{eq:opp}
    (n+\xi_{n,m})(m+\xi_{n,m})<0
\end{align}
otherwise they have same Krein signatures at the collision. Using \eqref{eq:opp}, we can rule out some collisions in Lemma~\ref{lem:collision} which will not lead to high-frequency instability.

\begin{lemma}\label{lem:krein}
 \begin{enumerate}
     \item For $\beta >0$, out of all collisions mentioned in Lemma~\ref{lem:collision}, $\{n,m\}$ with $n\leq -1$ and $m\geq 1$, and $\{-1,0\}$ have opposite Krein signatures.
     \item For $\beta <0$, all collisions mentioned in Lemma~\ref{lem:collision} have opposite Krein signatures.
 \end{enumerate}
\end{lemma}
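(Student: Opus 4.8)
The plan is to use the explicit Krein signature formula \eqref{eq:krein}, namely $\k_{n,\xi}=\operatorname{sgn}\left(\tfrac{1}{n+\xi}\omega_{n,\xi}\right)$, together with the criterion \eqref{eq:opp}: the colliding eigenvalues $i\omega_{n,\xi_{n,m}}$ and $i\omega_{m,\xi_{n,m}}$ have \emph{opposite} signatures precisely when $(n+\xi_{n,m})(m+\xi_{n,m})<0$. Since at a collision we have $\omega_{n,\xi_{n,m}}=\omega_{m,\xi_{n,m}}$, the two signatures are $\operatorname{sgn}\!\big(\tfrac{\omega}{n+\xi}\big)$ and $\operatorname{sgn}\!\big(\tfrac{\omega}{m+\xi}\big)$ with a common value $\omega$; these differ if and only if $n+\xi$ and $m+\xi$ have opposite signs, which is exactly condition \eqref{eq:opp}. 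So the entire lemma reduces to checking the sign of the product $(n+\xi_{n,m})(m+\xi_{n,m})$ for each surviving collision pair listed in Lemma~\ref{lem:collision}, recalling that $\xi_{n,m}\in(0,1/2]$.

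First I would fix the normalization $n<m$ used in the proof of Lemma~\ref{lem:collision}, writing $m=n+\Delta n$ with $\Delta n\in\mathbb{N}$ and $x=n+\xi\in[n,n+1/2]$. Then $(n+\xi)(m+\xi)=x(x+\Delta n)$, so I only need the sign of $x(x+\Delta n)$ on the relevant subinterval. For the $\beta>0$ case I would go through the collision types from Lemma~\ref{lem:collision}. The claim is that opposite signatures occur exactly for pairs straddling zero, i.e. $n\le -1$, $m\ge 1$, plus the special pair $\{-1,0\}$. When $n\le -1$ and $m\ge 1$ the product $x(x+\Delta n)$ has $x\le -1/2<0$ (since $x\in[n,n+1/2]\subset(-\infty,-1/2]$) while $x+\Delta n=m+\xi>0$, giving a negative product and hence opposite signatures. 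For the pair $\{-1,0\}$ (which is $\Delta n=1$, $n=-1$), $x=-1+\xi\in(-1,-1/2]$ so $x<0$ while $x+1=\xi>0$, again negative. Conversely, for pairs with $n,m$ both nonnegative or both $\le -1$ and not straddling zero, $x$ and $x+\Delta n$ carry the same sign on $[n,n+1/2]$, so \eqref{eq:opp} fails and the signatures agree; I would verify this covers all remaining $\beta>0$ collisions from Lemma~\ref{lem:collision}, namely $\Delta n=1$ pairs with $n\ge 0$ or $n\le -2$, and the $\Delta n\ge 2$ pairs away from the straddling ones.

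For $\beta<0$ the surviving collisions from Lemma~\ref{lem:collision} are the pairs $\{n,0\}$ with $n\le -2$ and the single pair $\{-1,1\}$. In the normalized coordinates these are $\Delta n=-n\ge 2$ with $m=0$ (so $x=n+\xi<0$, $x+\Delta n=\xi>0$, product negative) and $\{-1,1\}$ with $\Delta n=2$, $n=-1$ (so $x=-1+\xi<0$, $x+2=1+\xi>0$, product negative). In every $\beta<0$ case the product is negative, so \eqref{eq:opp} holds and all such collisions have opposite Krein signatures, as claimed. I would present these as a short enumerated verification mirroring the Lemma~\ref{lem:collision} case split.

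The argument is essentially bookkeeping once \eqref{eq:opp} is in hand, so there is no serious analytical obstacle; the one place to be careful is making sure the interval $x\in[n,n+1/2]$ (rather than an arbitrary point) is what controls the sign, and that for each listed pair the sign of $x(x+\Delta n)$ is genuinely \emph{constant} across this interval --- otherwise a single collision pair could a priori contribute both signatures. The only marginal cases are when a factor can vanish, but since $\xi\in(0,1/2]$ keeps $x$ strictly away from the integers $n$ and $m$ (and $x+\Delta n$ away from $0$ when $m=0$ forces $x+\Delta n=\xi>0$), the product never vanishes on the collision interval, so the signature assignment is unambiguous. Matching this sign analysis against the exact list in Lemma~\ref{lem:collision} to confirm no pair is miscounted is the main thing requiring attention to detail.
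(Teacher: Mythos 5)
Your proposal is correct and follows essentially the same route as the paper: the paper's proof likewise reduces the lemma to the sign condition \eqref{eq:opp}, observing that since $\xi_{n,m}\in(0,1/2]$ the product $(n+\xi_{n,m})(m+\xi_{n,m})$ is negative exactly when one index is $\leq -1$ and the other is $\geq 0$, and then matches this against the collision list of Lemma~\ref{lem:collision}. Your version is merely more explicit about the bookkeeping (and about the sign being constant over the collision interval), which the paper leaves implicit.
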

\begin{proof}
If $n\neq 0$ and $m\neq 0$ then from \eqref{eq:opp}, $n$ and $m$ must be of opposite signs for \eqref{eq:opp} to hold. If one of $n$ or $m$ is zero then the other needs to be negative in order for \eqref{eq:opp} to hold. Then the proof follows.
\end{proof}

\section{High-frequency instabilities}\label{sec:highfrequency}

\begin{table}[]
    \centering
    \begin{tabular}{c|c|c}
    \hline
       $\boldsymbol{\Delta n}$  & $\boldsymbol{\beta >0}$ & $\boldsymbol{\beta< 0}$  \\\hline
         $1$ & $\{-1,0\}$ & none\\\hline
         $2$ & none & $\{-2,0\},\{-1,1\}$\\\hline
         $\geq 3$ & $\{-1,\Delta n-1\},\{-2,\Delta n-2\},\dots,\{-\Delta n+1,1\}$ & $\{-\Delta n,0\}$\\\hline
    \end{tabular}
    \caption{Collisions with opposite Krein signatures for a given $\Delta n$ for $\beta>0$ and $\beta<0$.}
    \label{tab:col}
\end{table}

Table~\ref{tab:col} summarizes all the collisions with opposite Krein signatures based on Lemma~\ref{lem:krein} for a given $\Delta n$ for both $\beta >0$ and $\beta < 0$. In what follows, we do further analysis to check if collisions in 
Table~\ref{tab:col} corresponding to $\Delta n=1$, and $2$, lead to high-frequency instability.

\subsection{$\Delta n=1$ calculation and conclusion}
For a fixed $n\in \mathbb{Z}$, let $\xi_{0}\in (0,1/2]$ be such that
\begin{align*}
     0 \neq \omega_{n,\xi_0} = \omega_{n+1,\xi_0} =: \omega. 
\end{align*}
Therefore, $i\omega$ is an eigenvalue of $\mathcal{A}_{0,\xi_0}$ of multiplicity two with an orthonormal basis of eigenfunctions $\{e^{inz},e^{i(n+1)z}\}$. For $|a|$ small, let $\lambda_{n,a,\xi_0}$ and $\lambda_{n+1,a,\xi_0}$ be eigenvalues of $\mathcal{A}_{a,\xi_0}$ bifurcating from $i\omega$ with an orthonormal basis of eigenfunctions $\{\phi_{n,a,\xi_0}(z),\phi_{n+1,a,\xi_0}(z)\}$. Note that
$
\lambda_{n,0,\xi_0}=\lambda_{n+1,0,\xi_0}=i\omega
$
with $\phi_{n,0,\xi_0}(z)=e^{inz}$ and  $\phi_{n+1,0,\xi_0}(z)=e^{i(n+1)z}$. Let
\begin{align}\label{eq:lambda}
    \lambda_{n,a,\xi_0} = i \omega + i \mu_{n,a,\xi_0}
    \quad \quad \text{and} \quad\quad
    \lambda_{n+1,a,\xi_0} = i \omega + i \mu_{n+1,a,\xi_0}.
\end{align}
We are interested in the location of $\mu_{n,a,\xi}$ and $\mu_{n+1,a,\xi}$ for $|a|$ small as if they have non-zero imaginary parts then we obtain high-frequency instability.

We start with the following expansions of eigenfunctions
\begin{align}\label{eq:eig1}
    \phi_{n,a,\xi_0} =& e^{inz}+a\phi_{n,1}+a^2\phi_{n,2}+O(a^3), \\
    \phi_{n+1,a,\xi_0} =& e^{i(n+1)z}+a\phi_{n+1,1}+a^2\phi_{n+1,2}+O(a^3).\label{eq:eig2}
\end{align}
We use orthonormality of $\phi_{n,a,\xi_0}$ and $\phi_{n+1,a,\xi_0}$ to find that 
\[
\phi_{n,1}=\phi_{n,2}=\phi_{n+1,1}=\phi_{n+1,2}=0.
\]
To trace the bifurcation of the eigenvalues from the point of the collision on the imaginary axis for $|a|$ sufficiently small, we compute the actions of $\mathcal{A}_{a,\xi_0}$ and identity operators on the extended eigenspace $\{\phi_{n,a,\xi_0}(z), \phi_{n+1,a,\xi_0}(z)\}$ viz.
\begin{align}\label{eq:bmat1}
    \mathcal{B}_{a,\xi_0} = \left[ \frac{\langle \mathcal{A}_a(\xi_0)\phi_{i,a,\xi_0}(z),\phi_{j,a,\xi_0}(z)\rangle}{\langle\phi_{i,a,\xi_0}(z),\phi_{i,a,\xi_0}(z)\rangle} \right]_{i,j=n,n+1}
\text{ and }
    \mathcal{I}_{a} = \left[ \frac{\langle \phi_{i,a,\xi_0}(z),\phi_{j,a,\xi_0}(z)\rangle}{\langle\phi_{i,a,\xi_0}(z),\phi_{i,a,\xi_0}(z)\rangle} \right]_{i,j=n,n+1}.
\end{align}
Here $\langle\hspace{2px}\cdot\hspace{2px},\hspace{2px}\cdot\hspace{2px}\rangle$ denotes the $L^2(\mathbb{T})$- inner product as defined in \eqref{def:i-product}.

Using expansions of $w$ and $c$ in Theorem~\ref{T:sol}, we expand $\mathcal{A}_{a,\xi_0}$ in $a$ as
\[
\mathcal{A}_{a,\xi_0}=\mathcal{A}_{0,\xi_0}-2ak^2(\partial_z+i\xi_0)\cos z+a^2k^2(\partial_z+i\xi_0)(c_2-2A_2\cos 2z)+O(a^3)
\]
and use the expansion of eigenfunctions in \eqref{eq:eig1}-\eqref{eq:eig2} to find the matrices in \eqref{eq:bmat1} as
\begin{align*}
    \mathcal{B}_{a,\xi_0} = 
    \begin{bmatrix}
    i \omega +ik^2a^2(n+\xi_0)c_2  &  -ik^2a (n+1+\xi_0) \\
     &  \\
    -ik^2a (n+\xi_0)  & i \omega +ik^2a^2(n+1+\xi_0)c_2
    \end{bmatrix}+O(a^3)\end{align*}
and $ 
    \mathcal{I}_{a} = 
    \begin{bmatrix}
    1   &  0 \\
    0  & 1
    \end{bmatrix}+O(a^3).$
Note that $\mathcal{B}_{0,\xi_0}=\operatorname{diag}(i\omega,i\omega)$ which should be the case as $i\omega$ is an eigenvalue of $\mathcal{A}_{0,\xi_0}$ of multiplicity two.
The two values of $\mu$ solving the equation 
\begin{align}\label{eq:cheq1p}
    \det(\mathcal{B}_{a,\xi_0}-(i \omega + i \mu) \mathcal{I}_{a}) = 0,
\end{align}
would coincide with $\mu_{n,a,\xi_0}$ and $\mu_{n+1,a,\xi_0}$ in \eqref{eq:lambda} in leading order of $a$. 
Plugging the values in \eqref{eq:cheq1p} and calculating the discriminant of the quadratic in $\mu$, we arrive at
\begin{align*}
    \mathbb{D}_{a,\xi_0} = 4k^4a^2(n+\xi_0)(n+1+\xi_0)+O(a^3). 
\end{align*}
Therefore, for sufficiently small $|a|$,  if $(n+\xi_0)(n+1+\xi_0)$ is negative then we would obtain high-frequency instability.

From Table~\ref{tab:col}, the only collision for $\Delta n=1$ is when $\beta>0$ and $n=-1$. This collision takes place for all values of $\xi\in(0,1/2]$, see Figure~\ref{fig:cc_10A}.
\begin{figure}
\centering
\includegraphics[width=8cm,height=6cm]{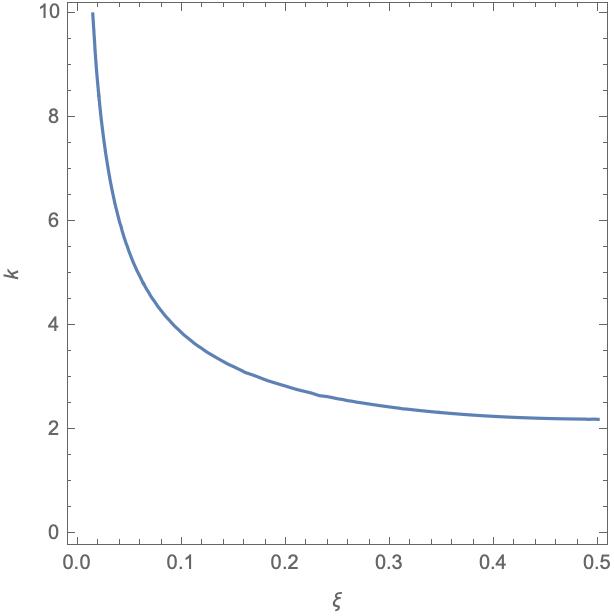}
\caption{Collision contour describing collision between eigenvalues $i\omega_{-1,\xi}$ and $i\omega_{0,\xi}$ for different values of $k$ and $\xi$ for $\gamma=6$ and $\beta=1$.}\label{fig:cc_10A}
\end{figure}
Analyzing the function $K(x,\Delta n)$ for $x=\xi-1$ and $\Delta n=1$, we easily deduce that this collision takes place for wavenumbers $k\in ((4\gamma/\beta)^{1/4},\infty)$. We summarize the result in the following theorem.

\begin{theorem}\label{thm:main}
For a fixed $\gamma>0$ and $\beta>0$, a $2\pi/k$-periodic traveling wave of \eqref{E:Ost} given by $u(x,t)=w(k(x-ct))$ where $w$ and $c$ are given in Theorem~\ref{T:sol} suffers high-frequency instability if 
\[
k>\sqrt[4]{\dfrac{4\gamma}{\beta}}.
\]
\end{theorem}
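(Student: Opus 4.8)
The plan is to read the instability directly off the reduced spectral problem for $\mathcal{A}_{a,\xi}$ furnished by Lemmas~\ref{lem:ft} and \ref{lem:eq}: the wave $w$ is spectrally unstable precisely when $\sigma(\mathcal{A}_{a,\xi})$ leaves the imaginary axis for some $\xi\in(0,1/2]$, and by the symmetry of the spectrum this can occur only through the collision of two purely imaginary eigenvalues. Among all $\Delta n=1$ collisions, Table~\ref{tab:col} isolates, for $\b>0$, the single candidate with opposite Krein signatures (the necessary condition \eqref{eq:opp}), namely the pair $\{-1,0\}$. I would therefore fix $n=-1$ and study the fate of the double eigenvalue $i\omega=i\omega_{-1,\xi_0}=i\omega_{0,\xi_0}$ as the amplitude $a$ is switched on.

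Next I would carry out the two-dimensional perturbative reduction already set up above: restrict $\mathcal{A}_{a,\xi_0}$ to the eigenspace spanned by $\{\phi_{-1,a,\xi_0},\phi_{0,a,\xi_0}\}$, form the matrices $\mathcal{B}_{a,\xi_0}$ and $\mathcal{I}_a$ of \eqref{eq:bmat1}, and track the two bifurcating eigenvalues as roots of \eqref{eq:cheq1p}. These eigenvalues leave the imaginary axis exactly when the discriminant $\mathbb{D}_{a,\xi_0}=4k^4a^2(n+\xi_0)(n+1+\xi_0)+O(a^3)$ of that quadratic is negative. Setting $n=-1$ gives $\mathbb{D}_{a,\xi_0}=4k^4a^2\,\xi_0(\xi_0-1)+O(a^3)$, whose leading coefficient is strictly negative for every $\xi_0\in(0,1)$; hence for all sufficiently small $a\neq0$ the perturbed eigenvalues acquire nonzero real part and $w$ is spectrally unstable.

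It then remains to identify for which carrier wavenumbers $k$ the collision $\{-1,0\}$ actually takes place with $\xi_0\in(0,1/2)$. Here I would invoke the collision relation $k^4=(\g/\b)\,K(\xi_0-1,1)$ from Lemma~\ref{lem:collision}; using $(x+1)^3-x^3-1=3x(x+1)$ to simplify $K(x,1)$ reduces the map $\xi_0\mapsto k^4$ to an explicit function on $(0,1/2]$ that I expect to be monotone, equal to $4\g/\b$ at $\xi_0=1/2$ and blowing up as $\xi_0\to0^+$. Consequently the collisions with $\xi_0\in(0,1/2)$ sweep out exactly $k\in\big((4\g/\b)^{1/4},\infty\big)$, which is the stated threshold.

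The one place demanding genuine care --- and the reason the inequality is \emph{strict} --- is the endpoint $k=(4\g/\b)^{1/4}$, where $\xi_0=1/2$. A direct evaluation of \eqref{E:omega} shows $\omega_{-1,1/2}=0$ there, so the $\{-1,0\}$ collision sits \emph{at} the origin; this is an origin collision rather than a high-frequency one, and the entire $\Delta n=1$ reduction above was predicated on $\omega\neq0$. I would therefore check that $\omega_{-1,\xi_0}\neq0$ for $\xi_0\in(0,1/2)$, confirming both the nondegeneracy of the collision and that the high-frequency mechanism operates only for $k$ strictly above the threshold. Establishing the monotonicity of $\xi_0\mapsto k^4$ and handling this boundary bookkeeping are the only real (and still routine) obstacles; everything else follows from the expansions of Theorem~\ref{T:sol} and the discriminant already in hand.
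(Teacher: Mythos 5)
Your proposal is correct and follows essentially the same route as the paper: isolate the $\{-1,0\}$ collision from Table~\ref{tab:col}, read off instability from the negative leading term $4k^4a^2\xi_0(\xi_0-1)$ of the discriminant of \eqref{eq:cheq1p}, and use $K(\xi_0-1,1)$ to identify the range $k\in\big((4\g/\b)^{1/4},\infty\big)$. Your extra bookkeeping at the endpoint $\xi_0=1/2$, where $\omega_{-1,1/2}=0$ and the collision degenerates to one at the origin, is accurate and consistent with the paper's earlier remark on origin collisions at $k=\big(\g/(\b(n+1/2)^2)\big)^{1/4}$; the paper leaves this implicit.
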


\subsection{$\Delta n=2$ calculation and conclusion}
We proceed as in the previous section. For a fixed $n\in \mathbb{Z}$, let $\xi_{0}\in (0,1/2]$ be such that
\begin{align*}
     0 \neq \omega_{n,\xi_0} = \omega_{n+2,\xi_0} =: \omega. 
\end{align*}
That is, $i\omega$ is an eigenvalue of $\mathcal{A}_{0,\xi_0}$ of multiplicity two with an orthonormal basis of eigenfunctions $\{e^{inz},e^{i(n+2)z}\}$. As before, for $|a|$ small, let $\lambda_{n,a,\xi_0}$ and $\lambda_{n+2,a,\xi_0}$ be eigenvalues of $\mathcal{A}_{a,\xi_0}$ bifurcating from $i\omega$ with an orthonormal basis of eigenfunctions $\{\phi_{n,a,\xi_0}(z),\phi_{n+2,a,\xi_0}(z)\}$. Let
\begin{align}\label{eq:lambda2}
    \lambda_{n,a,\xi_0} = i \omega + i \mu_{n,a,\xi_0}
    \quad \quad \text{and} \quad\quad
    \lambda_{n+2,a,\xi_0} = i \omega + i \mu_{n+2,a,\xi_0}
\end{align}
and we are interested in the location of $\mu_{n,a,\xi}$ and $\mu_{n+2,a,\xi}$ for $|a|$ small. Again, using orthonormality of $\phi_{n,a,\xi_0}$ and $\phi_{n+2,a,\xi_0}$ we find that 
\begin{align}\label{eq:eig11}
    \phi_{n,a,\xi_0} = e^{inz}+O(a^5)\quad \text{ and }\quad
    \phi_{n+2,a,\xi_0} = e^{i(n+2)z}+O(a^5).
\end{align}
As before, we compute the action matrices of $\mathcal{A}_{a,\xi_0}$ and identity operators on the extended eigenspace $\{\phi_{n,a,\xi_0}(z), \phi_{n+2,a,\xi_0}(z)\}$. We use expansions of $w$ and $c$ in Theorem~\ref{T:sol} to expand $\mathcal{A}_{a,\xi_0}$ in $a$ as
\begin{align*}
\mathcal{A}_{a,\xi_0}=&\mathcal{A}_{0,\xi_0}-2ak^2(\partial_z+i\xi_0)\cos z+a^2k^2(\partial_z+i\xi_0)(c_2-2A_2\cos 2z)-2a^3k^2A_3(\partial_z+i\xi_0)\cos 3z\\
&+a^4k^2(\partial_z+i\xi_0)(c_4-2(A_{42}\cos 2z+A_{44}\cos 4z))+O(a^5).
\end{align*}
Using the expansion of eigenfunctions in \eqref{eq:eig11}, the matrices in \eqref{eq:bmat1} turn out to be
\begin{align*}
    \mathcal{B}_{a,\xi_0} = 
    \begin{bmatrix}
    i \omega + ik^2(a^2A_2+a^4c_4)(n+\xi_0)  &  -ik^2(a^2A_2+a^4A_{42}) (n+2+\xi_0) \\
     &  \\
     -ik^2(a^2A_2+a^4A_{42})(n+\xi_0)  & i \omega + ik^2(a^2A_2+a^4c_4)(n+2+\xi_0)
    \end{bmatrix}+O(a^5)
\end{align*}
and
$
    \mathcal{I}_{a} = 
    \begin{bmatrix}
    1   &  0 \\
    0  & 1
    \end{bmatrix}+O(a^5).
$
Again, we solve the equation 
\begin{align*}
    \det(\mathcal{B}_{a,\xi_0}-(i \omega + i \mu) \mathcal{I}_{a}) = 0,
\end{align*} 
to obtain a quadratic in $\mu$
whose discriminant is given by
\begin{align*}
    \mathbb{D}_{a,\xi_0} = 4k^4a^4A_2^2(n+\xi_0+1)^2 + O(a^5).
\end{align*}
Note that, irrespective of the values of $n$ and $\xi_0$, the leading term in the discriminant is always positive. Therefore, we do not observe any high-frequency instability for $\Delta n=2$ case by performing the perturbation calculation up to fourth power of the amplitude parameter $a$.
\section*{Acknowledgement} Bhavna and AKP are supported by the Science and Engineering Research Board (SERB), Department of Science and Technology (DST), Government of India under grant SRG/2019/000741. AK is supported by Junior Research Fellowships (JRF) by Council of Scientific and Industrial Research (CSIR), Government of India.

\section*{Conflict of interest statement}
On behalf of all authors, the corresponding author states that there is no conflict of interest.

\section*{Data availability statement}
Data sharing not applicable to this article as no datasets were generated or analysed during the current study.

\bibliographystyle{amsalpha}
\bibliography{Ost.bib}

\providecommand{\bysame}{\leavevmode\hbox to3em{\hrulefill}\thinspace}
\providecommand{\MR}{\relax\ifhmode\unskip\space\fi MR }
\providecommand{\MRhref}[2]{%
  \href{http://www.ams.org/mathscinet-getitem?mr=#1}{#2}
}
\providecommand{\href}[2]{#2}
\begin{thebibliography}{LLW12}

\bibitem[BJ10]{Bronski2010TheEquation}
Jared~C. Bronski and Mathew~A. Johnson, \emph{{The modulational instability for
  a generalized korteweg-de vries equation}}, Archive for Rational Mechanics
  and Analysis \textbf{197} (2010), no.~2.

\bibitem[DT17]{Deconinck2017High-frequencyPDES}
Bernard Deconinck and Olga Trichtchenko, \emph{{High-frequency instabilities of
  small-amplitude solutions of Hamiltonian PDES}}, Discrete and Continuous
  Dynamical Systems- Series A \textbf{37} (2017), no.~3.

\bibitem[GGS95]{Gilman1995ApproximateEquation}
O.~A. Gilman, R.~Grimshaw, and Yu.~A. Stepanyants, \emph{{Approximate
  Analytical and Numerical Solutions of the Stationary Ostrovsky Equation}},
  Studies in Applied Mathematics \textbf{95} (1995), no.~1.

\bibitem[GS91]{Galkin1991OnFluid}
V.~N. Galkin and Yu~A. Stepanyants, \emph{{On the existence of stationary
  solitary waves in a rotating fluid}}, Journal of Applied Mathematics and
  Mechanics \textbf{55} (1991), no.~6.

\bibitem[Har08]{Haragus2008STABILITYEQUATION}
Mariana Haragus, \emph{{Stability of periodic waves for the generalized BBM
  equation}}, Rev. Roumaine Maths. Pures Appl. \textbf{53} (2008), 445--463.

\bibitem[HJ15]{Hur2015ModulationalWaves}
Vera~Mikyoung Hur and Mathew~A. Johnson, \emph{{Modulational Instability in the
  Whitham Equation for Water Waves}}, Studies in Applied Mathematics
  \textbf{134} (2015), no.~1.

\bibitem[HP16]{Hur2016ModulationalType}
Vera~Mikyoung Hur and Ashish~Kumar Pandey, \emph{{Modulational instability in
  nonlinear nonlocal equations of regularized long wave type}}, Physica D:
  Nonlinear Phenomena \textbf{325} (2016).

\bibitem[HP19]{Hur2019ModulationalModel}
\bysame, \emph{{Modulational instability in a full-dispersion shallow water
  model}}, Studies in Applied Mathematics \textbf{142} (2019), no.~1, 3--47.

\bibitem[HSS17]{Hakkaev2017PeriodicStability}
Sevdzhan Hakkaev, Milena Stanislavova, and Atanas Stefanov, \emph{{Periodic
  traveling waves of the regularized short pulse and Ostrovsky equations:
  Existence and stability}}, SIAM Journal on Mathematical Analysis \textbf{49}
  (2017), no.~1.

\bibitem[LLW12]{Lu2012OrbitalEquation}
Dianchen Lu, Lili Liu, and Li~Wu, \emph{{Orbital stability of solitary waves
  for generalized Ostrovsky equation}}, Lecture Notes in Electrical
  Engineering, vol. 136 LNEE, 2012.

\bibitem[LM06]{Linares2006LocalEquation}
Felipe Linares and Aniura Milan{\'{e}}s, \emph{{Local and global well-posedness
  for the Ostrovsky equation}}, Journal of Differential Equations \textbf{222}
  (2006), no.~2.

\bibitem[MS86]{MacKay1986STABILITYWAVES.}
R.~S. MacKay and P.~G. Saffman, \emph{{STABILITY OF WATER WAVES.}}, Proceedings
  of The Royal Society of London, Series A: Mathematical and Physical Sciences
  \textbf{406} (1986), no.~1830.

\bibitem[Ost78]{Ostrovsky1978NonlinearOcean}
L.~A. Ostrovsky, \emph{{Nonlinear internal waves in a rotating ocean}},
  Oceanology \textbf{18} (1978), no.~2.

\end{thebibliography}
\end{document}